\def\Z{{\mathbb Z}}
\def\ct{{\rm ct}}
\def\Disc{{\rm Disc}}
\def\Aut{{\rm Aut}}
\def\R{{\mathbb R}}
\def\F{{\mathbb F}}
\def\Q{{\mathbb Q}}
\def\Z{{\mathbb Z}}
\def\F{{\mathbb F}}
\def\Q{{\mathbb Q}}
\def\C{{\mathbb C}}
\def\m1{{\textrm{\textbf{Id}}}}
\def\Aut{{\rm Aut}}
\def\Gal{{\mathrm{Gal}}}
\def\Aut{{\mathrm{Aut}}}
\def\Gal{{\mathrm{Gal}}}
\def\ind{{\mathrm{ind}}}
\def\ct{{\mathrm{ct}}}
\newtheorem{theorem}{Theorem}
\newtheorem{corollary}[theorem]{Corollary}
\newtheorem{lemma}[theorem]{Lemma}
\newtheorem{proposition}[theorem]{Proposition}
\newenvironment{proof}{\noindent {\bf Proof:}}{$\Box$ \vspace{2 ex}}
\title{A proof of van der Waerden's Conjecture on\\ random Galois groups of polynomials}  
\author{\,Manjul Bhargava\,\\ Princeton University }
\date{}
\begin{document}

\maketitle


\begin{abstract}
Of the $(2H+1)^n$ monic integer polynomials $f(x)=x^n+a_1 x^{n-1}+\cdots+a_n$ with $\max\{|a_1|,\ldots,|a_n|\}\leq H$, how many have associated Galois group that is not the full symmetric group $S_n$? There are clearly $\gg H^{n-1}$ such
polynomials, as may be obtained by setting $a_n=0$. In 1936, van der Waerden conjectured that $O(H^{n-1})$ should in fact also
be the correct upper bound for the count of such polynomials. The conjecture has been known previously for degrees $n\leq 4$,
due to work of van der Waerden and Chow and Dietmann.  

In this expository article, we outline a proof of van der Waerden's Conjecture for all degrees~$n$.\footnote{This article is the text of the announcement of  and talk on this result given on the occasion of Don~Zagier's 70th birthday celebration in the Number Theory Web Seminar on July 1, 2021.
Some details and arguments have been added for readability and completeness.  Happy birthday, Don!}
\end{abstract}

\section{Introduction}

Let $E_n(H)$ denote the number of monic integer polynomials $f(x)=x^n+a_1 x^{n-1} +\cdots+a_n$ of degree $n$ with $|a_i|\leq H$ for all $i$ such that the Galois group $\Gal(f)$ is not $S_n$. 
There are clearly $\gg H^{n-1}$ such polynomials,  as can be seen by setting $a_n=0$. 
In 1936, van der Waerden made the tantalizing conjecture that $O(H^{n-1})$ should in fact also be the correct upper bound for the count of such polynomials.
In other words, the probability that a monic polynomial with coefficients bounded by $H$ in absolute value has Galois group not isomorphic to $S_n$ \linebreak is~$\asymp 1/H$.  



Hilbert irreducibility implies that $E_n(H)=o(H^n)$, i.e., $100\%$ of monic polynomials of degree $n$ are irreducible and have Galois group $S_n$.  In 1936, van der Waerden~\cite{vdw} proved the first quantitative version of this statement by demonstrating that \vspace{-.05in}
$$E_n(H)= O(H^{n-\frac{1}{6(n-2)\log\log H}}).$$
The first power-saving bound was obtained by Knobloch~\cite{Knobloch} (1956)  who proved that
 $$E_n(H)= O(H^{n-\frac{1}{18n(n!)^3}}); $$ 
successive improvements to Knobloch's bound were then given by 
Gallagher~\cite{Gallagher} (1973) who proved using his large sieve that 
    $$E_n(H)= O(H^{n-1/2+\epsilon}),  $$ 
Zywina~\cite{Zywina} (2010) who using a ``larger sieve'' refined this to
    $$E_n(H)= O(H^{n-1/2}), $$  
Dietmann~\cite{Dietmann2} (2010) who proved using resolvent polynomials and the determinant method that 
 $$E_n(H)=O(H^{n-2+\sqrt2}), $$ and 
Anderson, Gafni, Lemke Oliver, Lowry-Duda, Shakan, and Zhang~\cite{aimgroup} (2021) who prove using a Selberg-style sieve that $$E_n(H)=O(H^{n-\frac23+\frac8{9n+21}+\epsilon}).$$
(For more on the uses of the large sieve in this and related problems, see the works of Cohen~\cite{Cohen} and Serre~\cite{Serre}.)



The purpose of this article is to prove that, indeed, 
$E_n(H)=O(H^{n-1})$, as was conjectured by van der Waerden: 
\begin{theorem}\label{main}
We have 
$E_n(H)=O(H^{n-1}).$
\end{theorem}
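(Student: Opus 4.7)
The plan is to separate the polynomials with $\Gal(f)\ne S_n$ into two broad classes---(i) reducible $f$ (equivalently, $\Gal(f)$ intransitive), and (ii) irreducible $f$ with $\Gal(f)$ contained in some proper transitive subgroup---and bound each by $O(H^{n-1})$.  In case (ii), I would partition further according to a maximal proper transitive overgroup $G\subsetneq S_n$ of $\Gal(f)$.  Such $G$ falls into three families up to conjugacy: imprimitive subgroups (stabilizers of a nontrivial block system), primitive proper subgroups other than $A_n$ (a short and well-understood list for each $n$), and the alternating group $A_n$ itself.

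The reducible case is classical: writing $f=gh$ and using Mahler-measure bounds on the coefficients of $g,h$, together with the observation that $a_n=g(0)h(0)$ admits only $O(H)$ effective splittings, one obtains $O(H^{n-1})$.  For each maximal proper transitive $G\ne A_n$, I would use the $G$-invariant resolvent polynomial $R_G(f)\in\Z[x]$, defined so that $\Gal(f)$ is conjugate into $G$ if and only if $R_G(f)$ has a rational root.  Such a rational root must be an integer of size bounded by a polynomial in $H$, and for each fixed $r$ the equation $R_G(f)(r)=0$ cuts out a hypersurface in coefficient space; a direct lattice-point count along this hypersurface, summed over admissible $r$, yields $O(H^{n-1})$.

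The heart of the argument is the case $G=A_n$, where $R_G(f)(x)=x^2-\disc(f)$, so one must show
\[
\#\{f : |a_i|\le H,\ \disc(f)\ \text{is a perfect square}\}=O(H^{n-1}).
\]
My plan here is to invoke the ring-theoretic parametrization of monic degree-$n$ polynomials by pairs $(R,\alpha)$, where $R=\Z[x]/(f)$ is a rank-$n$ $\Z$-algebra and $\alpha=x\bmod f$ is a monogenic generator, so that $\disc(f)=\disc(R)$.  If $\disc(R)$ is a square, then from $\disc(R)=[\O_L:R]^2\disc(\O_L)$ (where $L=R\otimes\Q$ and $\O_L$ is the maximal order) one deduces that $\disc(\O_L)$ is itself a square.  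This reduces the count to a sum, over \'etale $\Q$-algebras $L$ of degree $n$ with square maximal-order discriminant, of the number of monogenic $\alpha\in\O_L$ (in any suborder) whose minimal polynomial has coefficients bounded by $H$.

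The main obstacle will be obtaining the two estimates needed to make this sum equal to $O(H^{n-1})$: (a) a bound on the number of degree-$n$ \'etale $\Q$-algebras whose maximal-order discriminant is a square of absolute value $\le X$ (needed for $X$ up to $H^{2n-2}$), and (b) an on-average bound of $O(H)$ for the number of monogenic $\alpha\in\O_L$ whose minimal polynomial has coefficients bounded by $H$ (saving one power from the trivial translation $\alpha\mapsto\alpha+m$).  Both require delicate geometry-of-numbers arguments in the spirit of Bhargava's earlier work on counting number fields; (a) in particular must exploit the unusual rarity of number fields whose discriminant is a perfect square, and (b) requires a careful analysis of the shape of $\O_L$ as a lattice in $L\otimes\R$.
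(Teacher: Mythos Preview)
Your decomposition into reducible, imprimitive, primitive-non-$A_n$, and $A_n$ is reasonable at the outset, and the reducible and imprimitive cases can indeed be handled by existing results (Chela, Widmer). But the two remaining cases each contain a genuine gap.

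For primitive $G\neq A_n,S_n$, the resolvent method you sketch is exactly Dietmann's approach, and it does \emph{not} give $O(H^{n-1})$. The rational root $r$ of $R_G(f)$ is an integer of size up to $H^M$ with $M$ depending on $[S_n:G]$; a hypersurface count for each fixed $r$ gives $O(H^{n-1})$, but summing over $O(H^M)$ values of $r$ destroys this. Dietmann, using the determinant method to control this sum, obtained only $O(H^{n-2+\sqrt{2}})$. Your phrase ``summed over admissible $r$, yields $O(H^{n-1})$'' hides the entire difficulty.

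The more serious gap is in the $A_n$ case. Your plan reduces to counting, over \'etale algebras $L$ of degree $n$ with $\Disc(\O_L)$ a square, the monogenic generators in the box. But $\disc(f)$ ranges up to about $H^{2n-2}$, so you would need item~(a) for $X$ up to $H^{2n-2}$. No bound on the number of degree-$n$ fields with square discriminant up to $X$ is known that is anywhere near strong enough: even Schmidt's bound $O(X^{(n+2)/4})$ on \emph{all} fields, combined with the Lemke~Oliver--Thorne bound for~(b), would give something like $O(H^{(n-1)(n+2)/2})$. The paper's proof gets around this by a case split on the size of $D=|\Disc(K_f)|$: only when $D\le H^{2+2\delta}$ does one count fields (this is your strategy, and Schmidt's bound then suffices); when $D$ is large one instead exploits directly that $D$ is \emph{squarefull}---a consequence of Jordan's theorem valid for every primitive $G\neq S_n$, not just $A_n$---and counts polynomials satisfying the resulting congruence conditions via Fourier analysis over $\Z/C\Z$ with $C=\mathrm{rad}(D)$. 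This Fourier/congruence argument for large $D$ (Cases~I and~III in the paper, together with Lemma~\ref{keylemma}) is the missing idea in your proposal; without it, the field-counting approach cannot reach discriminants beyond roughly $H^2$.
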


More generally, for any permutation group $G\subset S_n$ on $n$ letters, let $N_n(G,H)$ denote the number of monic integer polynomials $f(x)=x^n+a_1 x^{n-1} +\cdots+a_n$ with $|a_i|\leq H$ for all $i$ such that~$\Gal(f)\cong G$.  Then the above theorem amounts to proving that $N_n(G,H)=O(H^{n-1})$ for all permutation groups $G<S_n$. 

The methods we describe can in fact be used to give the best known bounds on $N_n(G,H)$ for various individual Galois groups $G$ (see~\cite{vdwproof} for details), and can also be used to prove a number of other variations of Theorem~\ref{main}. In this expository article, unlike~\cite{vdwproof}, we make a beeline towards proving just Theorem~\ref{main}, van der Waerden's Conjecture, in full.  
Reading this shorter exposition may also be useful as a precursor to reading the more general and more detailed article \cite{vdwproof}. 

\section{Preliminaries}

\subsection{Known results for intransitive and imprimitive groups}

That $N_n(G,H)=O(H^{n-1})$ holds for {\bf intransitive} groups $G$ was already shown by van der Waerden,  using the fact that polynomials having such Galois groups are exactly those that factor over~$\Q$.  In fact, an exact asymptotic of the form 
\begin{equation*}
\sum_{G\subset S_n\,\mathrm{intransitive}} N_n(G,H)= c_n H^{n-1}+O(H^{n-2})
\end{equation*} 
for an explicit constant $c_n>0$ was obtained by Chela~\cite{Chela}.  

\pagebreak 
Meanwhile, 
Widmer~\cite{Widmer} has given excellent bounds in the case of {\bf imprimitive} Galois groups $G$,  using the fact that polynomials having such Galois groups are exactly those that correspond to number fields having~a~nontrivial subfield.  (A permutation group $G$ is said to be {\it primitive} if it does not preserve any nontrivial partition of $\{1,\ldots,n\}$, and is {\it imprimitive} otherwise.)  Specifically, Widmer proves that 

\begin{equation*}
\sum_{G\subset S_n\,\mathrm{transitive\: but\: imprimitive}} N_n(G,H)= O(H^{n/2+2}).\vspace{.025in}
\end{equation*} 
Chow and Dietmann~\cite{CD} showed that van der Waerden's Conjecture holds for $n\leq 4$. Hence, to prove Theorem~\ref{main}, it suffices to show that $N_n(G,H)=O(H^{n-1})$ for {\bf primitive} permutation groups $G\neq S_n$ for all $n\geq 5$. 

\subsection{Primitive Galois groups that are not $S_n$} 

We now use the following result of Jordan on primitive permutation groups.

\begin{proposition}[Jordan] {If $G\subset S_n$ is a primitive permutation group on $n$ letters  that contains a transposition,  then $G=S_n$.} 
\end{proposition}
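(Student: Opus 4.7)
The plan is to produce Jordan's classical argument: build a $G$-invariant equivalence relation out of the transpositions sitting inside $G$, and then use primitivity to conclude that this relation identifies everything.

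More precisely, I would define a relation $\sim$ on $\{1,\ldots,n\}$ by declaring $i\sim j$ if and only if $i=j$ or the transposition $(i\,j)$ lies in $G$. The first step is to verify that $\sim$ is an equivalence relation. Reflexivity and symmetry are obvious from the definition; for transitivity, if $(i\,j),(j\,k)\in G$ then the conjugate
\[
(i\,j)(j\,k)(i\,j)^{-1}=(i\,k)
\]
belongs to $G$ as well, so $i\sim k$. The second step is to check that the partition of $\{1,\ldots,n\}$ given by the equivalence classes is preserved by the action of $G$. This is again a conjugation computation: for any $g\in G$ and any transposition $(i\,j)\in G$, we have $g(i\,j)g^{-1}=(g(i)\,g(j))\in G$, so $i\sim j$ implies $g(i)\sim g(j)$.

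The third step is to apply the hypothesis of primitivity. By assumption, $G$ contains some transposition $(a\,b)$, so $a\sim b$ with $a\neq b$; hence at least one equivalence class has size $\geq 2$, and the partition is not the partition into singletons. Since $G$ is primitive, the only other $G$-invariant partition is the partition with a single block, so every pair $i\neq j$ in $\{1,\ldots,n\}$ must satisfy $i\sim j$. This means every transposition $(i\,j)$ lies in $G$. Since the transpositions generate $S_n$, we conclude $G=S_n$.

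There is no really substantive obstacle here: the whole argument is driven by the single identity $(i\,j)(j\,k)(i\,j)^{-1}=(i\,k)$ and the conjugation formula $g(i\,j)g^{-1}=(g(i)\,g(j))$. The only point that requires mild care is the bookkeeping at the last step, namely confirming that under the paper's definition of primitivity (no nontrivial invariant partition) a $G$-invariant partition with a block of size $\geq 2$ is forced to be the one-block partition, which is immediate from the definition.
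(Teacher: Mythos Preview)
Your argument is correct and follows exactly the same route as the paper's proof: define $i\sim j$ iff $(i\,j)\in G$, observe this is a $G$-invariant equivalence relation, and use primitivity to force a single block. You have simply spelled out the transitivity and invariance steps (via the conjugation identities) that the paper leaves implicit.
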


\begin{proof}
Suppose that $G\subset S_n$ is a primitive permutation group on $n$ letters containing a transposition.  Define an equivalence relation $\sim$ on $\{1,\ldots,n\}$ by defining $i\sim j$ if the transposition $(i\,j)\in G$.
 Then the action of $G$ clearly preserves the equivalence relation $\sim$ on $\{1,\ldots,n\}$. However, since $G$ is primitive, it cannot preserve any nontrivial partition of~$\{1,\ldots,n\}$. 
 Therefore, we must have $i\sim j$ (i.e., $(i\,j)\in G$) for all $i,j$, and so $G=S_n$ since~$S_n$ is generated by its transpositions.
\end{proof}

\noindent
Hence a primitive permutation group $G\neq S_n$ cannot contain a transposition.  This has the following consequence for the discriminants of polynomials $f\in\Z[x]$ of degree $n$ whose associated Galois group is not $S_n$:

\begin{corollary}\label{valindcon} Let $f$ be an integer polynomial of degree $n$, and let $K_f:=\Q[x]/(f(x))$.  \linebreak If $\Gal(f)\neq S_n$ is primitive, then the discriminant $\Disc(K_f)$ is squarefull.
\end{corollary}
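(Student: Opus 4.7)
The plan is to argue contrapositively using Jordan's proposition. Viewing $\Gal(f)$ as a subgroup of $S_n$ through its faithful action on the $n$ roots of $f$, the hypothesis that $\Gal(f) \neq S_n$ is primitive forces $\Gal(f)$ to contain no transposition. It therefore suffices to show: if $p$ is a prime with $v_p(\Disc(K_f)) = 1$, then $\Gal(f)$ must contain a transposition. Establishing this contradiction will force $v_p(\Disc(K_f)) \geq 2$ for every prime $p \mid \Disc(K_f)$, which is exactly squarefullness.

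First I would unpack the local factorization of $p$ in $\O_{K_f}$. Writing $p\O_{K_f} = \prod_i \mathfrak{p}_i^{e_i}$ with residue degrees $f_i$, the standard different-to-discriminant formula gives
$$v_p(\Disc(K_f)) \;=\; \sum_i \delta_i f_i,$$
where $\delta_i = e_i - 1$ if $\mathfrak{p}_i/p$ is tamely ramified and $\delta_i \geq e_i$ if wildly ramified. Imposing $v_p(\Disc(K_f)) = 1$ therefore rules out any wild ramification above $p$ (each wild prime contributes at least $2$) and forces exactly one index with $(e_i - 1)f_i = 1$; that is, a unique ramified prime with $e = 2$, $f = 1$, while all other primes above $p$ are unramified.

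Next I would pass to the Galois closure $\tilde K_f$, fix a prime $\mathfrak{P}$ above $p$, and let $I \subseteq G = \Gal(\tilde K_f/\Q)$ be the inertia group at $\mathfrak{P}$. Identifying the root set of $f$ with $G/H$ for $H = \Gal(\tilde K_f/K_f)$, a standard double-coset calculation shows that the $I$-orbits on $G/H$ have sizes equal to the $e_i$'s (with each $\mathfrak{p}_i$ contributing $f_i$ such orbits). In our setting $I$ acts on the $n$ roots with one orbit of size $2$ and $n-2$ fixed points, so the image of $I$ in $S_n$ is the cyclic group of order $2$ generated by a single transposition. This transposition lies in $\Gal(f)$, contradicting Jordan's proposition and completing the argument.

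The step I expect to require the most care is the inertia/orbit computation, specifically avoiding any appeal to tameness of $p$ in $\tilde K_f$ (which is not automatic from tameness in $K_f$). The cleanest route is to read off the $I$-orbit sizes directly from the double-coset decomposition $D \backslash G / H$, so that the ``one orbit of size $2$, rest fixed'' conclusion depends only on the $e_i$'s in $K_f$ and is indifferent to how wild ramification behaves further up in the Galois closure.
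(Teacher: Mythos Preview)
Your proof is correct and takes essentially the same route as the paper's: both invoke Jordan to rule out transpositions in $\Gal(f)$, then read off from the inertia action on the root set that $v_p(\Disc(K_f))\geq 2$ at every ramified prime (the paper phrases this via the cycle type of a generator of $I_p$ rather than via $I$-orbit sizes, and argues directly rather than contrapositively). Your caution about wild ramification in $\tilde K_f$ is in fact unnecessary---tameness of $p$ in $K_f$ already forces tameness in the Galois closure, since the maximal tamely ramified extension of $\Q_p$ is Galois---though your orbit-size argument is a perfectly good way to sidestep the issue.
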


\begin{proof}
The Galois group $G=\Gal(f)$ acts on the $n$ embeddings of $K_f$ into its Galois closure. 
Suppose $p\mid \Disc(K_f)$, and $p$ factors in $K_f$ as $\prod P_i^{e_i}$, where $P_i$ has residue field degree~$f_i$. 
If~$p$ is tamely ramified in $K_f$, then 
any generator $g\in G\subset S_n$ of an inertia group $I_p\subset G$ at~$p$ is the product of disjoint cycles consisting of $f_1$ cycles of length $e_1$, $f_2$ cycles of length $e_2$, etc. Since $G$ does not the contain a transposition, we must have $e_i>2$ for some $i$ or $e_i=2$ and $f_i>1$ for some $i$, or $e_i=e_j=2$ for some~$i\neq j$; thus the discriminant valuation $v_p(\Disc(K_f))=\sum (e_i-1) f_i$ is at least $2$ in that case. 
If $p$ is wildly ramified, then 
automatically the discriminant valuation $v_p(\Disc(K_f))$ is at least $2$. 
Therefore, $\Disc(K_f)$ is~squarefull. 
\end{proof}

\section{Proof of van der Waerden's Conjecture (Theorem~\ref{main})} 

We first prove the ``weak version'' of the conjecture,  namely, that $E_n(H)=O_\epsilon(H^{n-1+\epsilon})$.

To accomplish this, we divide the set of irreducible  monic integer polynomials $f(x)=x^n+a_1x^{n-1}+\cdots+a_n$, such that $|a_i|<H$ for all $i$ and $\Gal(f)<S_n$ is primitive, into three subsets.   
Let again $K_f:=\Q[x]/(f(x))$.  

We consider the following three cases:

\begin{itemize}

\item {\bf Case I:} The product $C$ of the ramified primes in $K_f$ is at most $H$, but the absolute discriminant $D=|\Disc(K_f)|$ is greater than $H^2$. 

\item {\bf Case II:} 
The absolute discriminant $D=|\Disc(K_f)|$ is at most $H^2$. 

\item {\bf Case III:} The product $C$ of the ramified primes in $K_f$ is greater than $H$.

\end{itemize}
We estimate the sizes of each of these sets in turn.

\subsection{{{\bf Case I:} $C\leq H$ and $D>H^2$}} 

We first consider those $f$ for which the product $C$ of ramified primes in $K_f:=\Q[x]/(f(x))$ is at most $H$, but the absolute discriminant $D$ of $K=K_f$ is greater than $H^2$.

By Corollary~\ref{valindcon}, $D$ is squarefull as we have assumed that $\Gal(f)<S_n$ is primitive.  Given such a $D$, the polynomials $f$ such that $|\Disc(K_f)|=D$ satisfy congruence conditions modulo $C=\mathrm{rad}(D)$ of density $O(\prod_{p\mid C} c/p^{v_p(D)})=O(c^{\omega(D)}/D)$ for a suitable constant $c>0$. 
Since $C<H$, the number of such $f$ can be counted directly within the box $\{|a_i|<H\}$ of sidelength $H$; we immediately have the estimate $O(H^nc^{\omega(D)}/D)$ for the number of such $f$. 

Summing $O(H^nc^{\omega(D)}/D)$ over all squarefull $D>H^2$ gives the desired estimate $O_\epsilon(H^{n-1+\epsilon})$ in this case: \vspace{-.05in} 

\begin{equation}\label{sumoff}
\sum_{D>H^2 \:\mathrm{ squarefull}} O(H^nc^{\omega(D)}/D)= O_\epsilon(H^{n-1+\epsilon}).
\end{equation}

\subsection{{{\bf Case II:} 
$D\leq H^2$}} 

\vspace{-.095in}
We next consider those $f$ for which 
the absolute discriminant $D$ of $K_f$ is at most $H^2$. 

The number of isomorphism classes of number fields $K=K_f$ of degree $n$ and absolute discriminant at~most~$H^2$ is $O((H^2)^{(n+2)/4})=O(H^{(n+2)/2})$, by a result of Schmidt~\cite{Schmidt}.\footnote{\noindent Improved results for $n$ sufficiently large have been obtained by Ellenberg and Venkatesh~\cite{EV}, Couveignes~\cite{Couveignes}, and most recently, Lemke Oliver and Thorne~\cite{LOT2}.}  For all $n>2$, Schmidt's estimate was recently improved to $O(H^{(n+2)/2-\kappa_n})$ for a small $\kappa_n>0$ in joint work with Shankar and Wang~\cite{BSW} (see also the work of Anderson, Gafni, Hughes, Lemke Oliver, and Thorne~\cite{aimgroup2}). By a result of Lemke Oliver and Thorne~\cite{LOT}, each isomorphism class of number field $K$ of degree $n$ can arise for at most $O(H\log^{n-1}H/|\Disc(K)|^{1/(n(n-1))})$ monic integer polynomials~$f$ of degree $n$.  
 
 Thus the total number of $f$ that arise in this case is at most  
 $$O(H^{(n+2)/2-\kappa_n}\cdot H\log^{n-1}H)=O(H^{n-1})\pagebreak $$when $n\geq 6$.  The exact asymptotic results known for the density of discriminants of quintic fields~\cite{dodpf} immediately gives $O(H^2\cdot H\log^4H)=O(H^{n-1})$ when $n= 5$ as well. 

\subsection{{{\bf Case III:} $C> H$}}
 
 \vspace{-.05in}
Finally, we consider those $f$ for which the product $C$ of ramified primes in $K_f$ is greater than $H$. 

Fix such an $f$.  By Corollary~\ref{valindcon}, for every prime $p \mid C$, the polynomial $f$ has  either at least a  triple root or at least a pair of double roots modulo $p$.  Therefore, changing $f$ by a multiple of $p$ does not change the fact that $p^2\mid \Disc(f)$.  (We thus say that ``$\Disc(f)$ is a multiple of~$p^2$ for mod~$p$ reasons'' in this case.) 

\begin{proposition}\label{modpreasons}
If $h(x_1,\ldots,x_n)$ is an integer polynomial,  such that $h(c_1,\ldots,c_n)$ is a multiple of $p^2$,  and indeed $h(c_1+pd_1,\ldots,c_n+pd_n)$ is a multiple of $p^2$ for all $(d_1,\ldots,d_n)\in\Z^n$,  then $\frac\partial{\partial x_n} h(c_1,\ldots,c_n)$ is a multiple of $p$.
\end{proposition}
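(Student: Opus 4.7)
The plan is to apply a first-order Taylor expansion of $h$ at the point $(c_1,\ldots,c_n)$, evaluated at the shifted point $(c_1+pd_1,\ldots,c_n+pd_n)$. Since $h$ is a polynomial with integer coefficients, each second- and higher-order term in the expansion will carry a factor of at least $p^2$ (from two or more shifts of size $p$), so these terms can be absorbed into the error modulo $p^2$.

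Concretely, I would write
\begin{equation*}
h(c_1+pd_1,\ldots,c_n+pd_n) \equiv h(c_1,\ldots,c_n) + p\sum_{i=1}^n d_i\,\frac{\partial h}{\partial x_i}(c_1,\ldots,c_n) \pmod{p^2}.
\end{equation*}
By hypothesis, the left-hand side and the first term on the right are both divisible by $p^2$, so the remaining sum must also be divisible by $p^2$. Dividing through by $p$ yields
\begin{equation*}
\sum_{i=1}^n d_i\,\frac{\partial h}{\partial x_i}(c_1,\ldots,c_n) \equiv 0 \pmod{p}
\end{equation*}
for every choice of $(d_1,\ldots,d_n)\in\Z^n$. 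Specializing to $d_n=1$ and $d_i=0$ for $i<n$ isolates the desired partial derivative and gives $\frac{\partial h}{\partial x_n}(c_1,\ldots,c_n)\equiv 0\pmod{p}$.

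There is really no substantive obstacle here: the only thing to verify carefully is the claim that the higher-order terms in the Taylor expansion contribute $0$ modulo $p^2$, which follows immediately since each such monomial in the expansion is an integer multiple of a product of at least two of the $pd_i$, hence a multiple of $p^2$. The integrality of the Taylor coefficients $\frac{1}{\alpha!}\partial^\alpha h$ on integer inputs is automatic because $h$ has integer coefficients, so no denominators appear. One could equivalently phrase the whole argument algebraically by expanding $h(x_1+py_1,\ldots,x_n+py_n)$ as a polynomial in $y_1,\ldots,y_n$ with coefficients in $\Z[x_1,\ldots,x_n]$ and reducing modulo $p^2$, which avoids any mention of derivatives of non-polynomial functions and makes the divisibility transparent.
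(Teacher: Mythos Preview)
Your proof is correct and follows essentially the same Taylor-expansion idea as the paper. The only difference is cosmetic: the paper expands $h(c_1,\ldots,c_{n-1},x_n)$ in the single variable $x_n$ and then substitutes $x_n\equiv c_n\pmod p$, whereas you expand in all $n$ variables at once and then specialize $(d_1,\ldots,d_n)=(0,\ldots,0,1)$; your version incidentally shows that \emph{every} partial derivative $\frac{\partial h}{\partial x_i}(c_1,\ldots,c_n)$ is divisible by $p$, not just the last one.
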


\begin{proof}
Write $h(c_1,\ldots,c_{n-1},x_n)$ as  $$h(c_1,\ldots,c_n) + \textstyle\frac\partial{\partial x_n} h(c_1,\ldots,c_n)(x_n-c_n)+(x_n-c_n)^2r(x)\,.$$ 
Since the first and third terms are multiples of $p^2$ whenever $x_n\equiv c_n$ (mod $p$), the second term must be a multiple of $p^2$ as well, implying that $\frac\partial{\partial x_n} h(c_1,\ldots,c_n)$ is a multiple of~$p$. 
\end{proof}

Applying Proposition~\ref{modpreasons} to $h(a_1,\ldots,a_n)=\Disc(f)$, where $f(x)=x^n+a_1x^{n-1}+\cdots+a_n$, \linebreak we immediately conclude that $\frac\partial{\partial a_n} \Disc(f)$ is a multiple of $C$. Since both $\Disc(f)$ and $\frac\partial{\partial a_n} \Disc(f)$ are multiples of $C$ for our choice of $f$, the resultant of $\Disc(f)$ and $\frac\partial{\partial a_n} \Disc(f)$, i.e., the {\it double discriminant} 
$$\mathrm{DD}(a_1,\ldots,a_{n-1}):=\Disc_{a_n}(\Disc_x(f(x))),$$ 
must also be a multiple of $C$ for this choice of $f$. (An examination of the polynomial $f(x)=x^n+a_{n-1}x+a_n$ shows that $\mathrm{DD}(a_1,\ldots,a_{n-1})$ does not identically vanish.)

 
Let $f\in\Z[x]$ be a polynomial for which the product $C$ of ramified primes in $K_f$ is greater than $H$. 
For such an $f$, we have proven that the polynomial $\mathrm{DD}(a_1,\ldots,a_{n-1})$ is a multiple of $C$. 
The number of possible $a_1,\ldots,a_{n-1}\in[-H,H]^{n-1}$ such that $$\mathrm{DD}(a_1,\ldots,a_{n-1})=0$$is $O(H^{n-2})$,  and so the total number of $f$ with $\mathrm{DD}(a_1,\ldots,a_{n-1})=0$ is $O(H^{n-1})$. 

Let us now fix $a_1,\ldots,a_{n-1}$ such that $\mathrm{DD}(a_1,\ldots,a_{n-1})\neq 0$.  Then 
$\mathrm{DD}(a_1,\ldots,a_{n-1})$ has at most $O_\epsilon(H^\epsilon)$ factors $C>H$. 
Once $C$ is determined by $a_1,\ldots,a_{n-1}$,  the number of solutions for $a_n$ (mod $C$)~to $$\Disc(f)\equiv0 \mbox{ (mod $C$)}$$ is $$(\deg_{a_n}(\Disc(f)))^{\omega(C)}=O_\epsilon(H^\epsilon).$$
 Since $C>H$, the number of $a_n\in[-H,H]$ is also $O_\epsilon(H^\epsilon)$,  and so the total number  of $f$ in this case is again  $O_\epsilon(H^{n-1+\epsilon})$.

\subsection{Conclusion}

We have thus proven the following theorem:

\begin{theorem}\label{weakversion}
Let $E_n(H)$ denote the number of monic integer polynomials $f(x)=x^n+a_1 x^{n-1} +\cdots+a_n$ of degree $n$ with $|a_i|\leq H$ for all $i$ such that $\Gal(f)$ is not $S_n$.  
Then $E_n(H)=O_\epsilon(H^{n-1+\epsilon})$.
\end{theorem}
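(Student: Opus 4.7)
The theorem is essentially a synthesis of the three case analyses in Cases I--III above, so the plan is to verify that each of them contributes $O_\epsilon(H^{n-1+\epsilon})$ and sum. First I would reduce to primitive Galois groups $G \neq S_n$ by citing van der Waerden's bound for reducible (and hence intransitive-Galois) polynomials and Widmer's $O(H^{n/2+2})$ bound for transitive imprimitive groups. For the remaining primitive $G \neq S_n$, Corollary \ref{valindcon} gives the structural input that $\Disc(K_f)$ is squarefull, which drives the whole argument, and I would partition the relevant $f$ by comparing the product $C$ of ramified primes in $K_f$ against $H$ and $D = |\Disc(K_f)|$ against $H^2$.

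For Case I ($C \leq H$, $D > H^2$), for each fixed squarefull $D$ with radical $C$ the prescribed local ramification data $v_p(\Disc(K_f)) = v_p(D)$ cut out a congruence set modulo $C$ of density $O(c^{\omega(D)}/D)$; since $C \leq H$ the box is large enough to count directly, giving $O(H^n c^{\omega(D)}/D)$ per $D$, and the sum over squarefull $D > H^2$ (of which there are only $O(\sqrt{X})$ below $X$) converges to $O_\epsilon(H^{n-1+\epsilon})$. For Case II ($D \leq H^2$) I would combine Schmidt's (or the Bhargava--Shankar--Wang) bound of $O(H^{(n+2)/2 - \kappa_n})$ number fields of discriminant at most $H^2$ with the Lemke Oliver--Thorne bound of $O(H \log^{n-1} H)$ polynomials per field, obtaining $O(H^{n-1})$ for $n \geq 6$ and handling $n = 5$ via the exact quintic discriminant asymptotic.

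The decisive and most delicate case is Case III ($C > H$). Here the key step is the ``mod $p$ reasons'' observation: by Corollary \ref{valindcon}, for every $p \mid C$ the ramification pattern in $K_f$ forces $p^2 \mid \Disc(f)$ to persist under perturbations of $a_n$ modulo $p$, so Proposition \ref{modpreasons} implies $C \mid \partial \Disc(f)/\partial a_n$, and hence $C$ divides the nonzero double discriminant $\mathrm{DD}(a_1,\ldots,a_{n-1}) := \Disc_{a_n}(\Disc_x(f))$. The vanishing locus $\mathrm{DD}=0$ is a hypersurface in $(a_1,\ldots,a_{n-1})$-space and contributes only $O(H^{n-2}) \cdot O(H) = O(H^{n-1})$ total $f$. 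On the complement, the divisor bound gives $O_\epsilon(H^\epsilon)$ admissible $C > H$; once $C$ is fixed, $\Disc(f) \equiv 0 \pmod C$ has $O_\epsilon(H^\epsilon)$ solutions for $a_n$ modulo $C$, and since $C > H$ each residue has $O_\epsilon(H^\epsilon)$ lifts into $[-H,H]$.

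The hard part is clearly Case III, since Cases I and II are bookkeeping built on established counts. The crux is the double-discriminant trick: without it, the condition ``$C \mid \Disc(f)$ for mod-$p$ reasons'' is only a condition on $a_n$ once $(a_1,\ldots,a_{n-1})$ are fixed, which does not by itself give a power saving in the remaining $n-1$ variables. Differentiating in $a_n$ and taking the resultant converts the pointwise mod-$p$ flatness into a genuine algebraic constraint on $(a_1,\ldots,a_{n-1})$, and combined with the divisor bound this produces the needed $\epsilon$-close-to-optimal saving. Assembling the three estimates gives $E_n(H) = O_\epsilon(H^{n-1+\epsilon})$.
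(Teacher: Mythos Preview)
Your proposal is correct and follows essentially the same approach as the paper's own proof: the same reduction to primitive $G\neq S_n$ via van der Waerden/Chela and Widmer, the same three-case split on $C$ versus $H$ and $D$ versus $H^2$, the same congruence count in Case~I, the same Schmidt/BSW $+$ Lemke Oliver--Thorne combination in Case~II, and the same double-discriminant argument via Proposition~\ref{modpreasons} in Case~III. The only slip is cosmetic: in Case~III, once $C>H$ each residue class modulo $C$ has $O(1)$ (in fact at most one) lift into $[-H,H]$, not $O_\epsilon(H^\epsilon)$ --- but since $O(1)\subset O_\epsilon(H^\epsilon)$ this does not affect the final bound.
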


%
%
%
%
%
%

\subsection{Removing the $\epsilon$}

Removing the $\epsilon$ in Theorem~\ref{weakversion} turns out to be just as much work as proving Theorem~\ref{weakversion}. 

To remove the $\epsilon$ in {Case I}, we replace the condition $$C\leq H \mbox{ and } D>H^2$$ by  $$C\leq H^{1+\delta} \mbox{ and } D>H^{2+2\delta}$$ for some small $\delta=\delta_n>0$.  

The resulting congruence conditions in Case I are now modulo an integer $C$ that is potentially larger than the sidelength $H$ of the box.  However, using Fourier analysis (see~Subsection~3.6), we show sufficient equidistribution of the residue classes modulo $C$ that we are counting to extend the validity
of the count $O(H^nc^{\omega(D)}/D)$ even when $C < H^{1+\delta}$.  Specifically, using Fourier analysis, we prove:

\begin{lemma}\label{keylemma}
Let $0<\delta < 1/(2n-1)$. For each $i=1,\ldots,m$, let $k_i>1$ be a positive integer. 
Let $D$ be a positive integer with prime factorization $D=p_1^{k_1}\cdots p_m^{k_m}$ such that 
$C=p_1\cdots p_m<H^{1+\delta}$.
Then the number of monic integer polynomials $f$ of degree $n$ 
in $[-H,H]^{n}$ such that $|\Disc(K_f)|=D$ 
is at most $O(c^{\omega(C)} H^{n}/D).$
\end{lemma}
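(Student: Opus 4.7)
The plan is to prove the lemma via Fourier analysis on $(\Z/C\Z)^n$, leveraging the Chinese Remainder Theorem and Weil-type cancellation to establish equidistribution of the ``bad'' residue classes inside the box $[-H,H]^n$, even in the regime where $C$ exceeds the side length $H$.

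Let $S \subset (\Z/C\Z)^n$ be the set of residue classes of coefficient tuples $(a_1,\ldots,a_n)$ modulo $C$ compatible with $|\Disc(K_f)|=D$. The $p$-adic density computation invoked in Case~I, together with Corollary~\ref{valindcon}, gives $|S|=O(c^{\omega(C)}C^n/D)$, and by CRT $S$ decomposes as $\prod_{p\mid C} S_p$ with $S_p\subset(\Z/p\Z)^n$. Expanding $\mathbf{1}_S$ in additive characters and exchanging orders of summation,
\[
N \;=\; \sum_{\xi\in(\Z/C\Z)^n}\widehat{\mathbf{1}_S}(\xi)\prod_{i=1}^n K_H(\xi_i),
\]
where $K_H(\xi_i)=\sum_{a=-H}^H e^{2\pi i\xi_i a/C}$ is a one-dimensional Dirichlet-type kernel satisfying the standard bound $|K_H(\xi_i)|\le\min(2H+1,\,C/(2\|\xi_i/C\|))$.

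The trivial character $\xi=0$ contributes $(|S|/C^n)(2H+1)^n=O(c^{\omega(C)}H^n/D)$, which is precisely the target bound. The heart of the argument is to show that the contribution from nontrivial $\xi$ is of the same order. Via the CRT factorization $\widehat{\mathbf{1}_S}(\xi)=\prod_{p\mid C}\widehat{\mathbf{1}_{S_p}}(\xi_p)$, I would establish square-root-type cancellation for each $\widehat{\mathbf{1}_{S_p}}(\xi_p)$ at nontrivial $\xi_p$, improving the trivial bound $|\widehat{\mathbf{1}_{S_p}}(\xi_p)|\le|S_p|/p^n=O(c/p^{k_p})$ by a further factor of roughly $p^{-n/2}$. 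Such a bound follows from the algebraic description of $S_p$ as a union of strata indexed by the $\F_p$-factorization types of $f$, combined with Weil-type estimates for exponential sums over the resulting subvarieties of $\F_p^n$. Multiplying these over $p\mid C$ and pairing with the Dirichlet kernel bounds, the cumulative error can be shown to be $O(c^{\omega(C)}H^n/D)$ precisely in the regime $C<H^{1+\delta}$ with $\delta<1/(2n-1)$; the exponent arises from balancing the $(C/H)^n$ factor by which $|S|$ exceeds the naive ``random'' prediction against the square-root Fourier savings and the logarithmic losses from the Dirichlet kernel.

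The main obstacle will be establishing the uniform square-root cancellation for $\widehat{\mathbf{1}_{S_p}}$ across all primes $p\mid C$ and all nontrivial characters. This requires a careful stratification of $\F_p^n$ by mod-$p$ factorization types of $f$, a Weil-type estimate on each stratum, and a separate treatment of the few wildly ramified primes $p\le n$, which contribute only an absorbable $O(1)$ factor to the final bound.
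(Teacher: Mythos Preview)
Your overall framework---Poisson summation over $(\Z/C\Z)^n$, CRT factorization $\widehat{\mathbf 1_S}=\prod_p\widehat{\mathbf 1_{S_p}}$, and square-root cancellation in the local Fourier transforms---is exactly the paper's approach. The differences are in the execution of the cancellation step, and there your proposal has a real gap.

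You claim a saving of $p^{-n/2}$ over the trivial bound $|\widehat{\mathbf 1_{S_p}}(\xi_p)|\le O(p^{-k_p})$. This is too optimistic, and in fact inconsistent with your own endpoint: a uniform saving of $C^{-n/2}$ would let the error term beat the main term for any $\delta<1$, not merely $\delta<1/(2n-1)$. The paper obtains (and only needs) a saving of $p^{-1/2}$ per prime, i.e.\ $|\widehat{w_{p,\sigma}}(\xi_p)|=O(p^{-k_p-1/2})$ at nontrivial $\xi_p$; after stratifying over $q=(\ct(g),C)$ this yields an error $O_\epsilon(C^{\,n-1/2+\epsilon}/D)$, and it is precisely the exponent $n-\tfrac12$ that forces $\delta<1/(2n-1)$. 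Your appeal to ``Weil-type estimates over subvarieties'' would, even if it could be pushed through, require Deligne-strength input on singular varieties (the strata of the discriminant locus are highly singular), which is well beyond what the lemma needs.

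The mechanism the paper actually uses is much simpler and is the idea you are missing. One works with the weighted count $w_{p,\sigma}(f)$ of factorizations of type $\sigma$ rather than with the raw indicator of $S_p$. The multiset of pairs $(f;P_1,\ldots,P_r)$ with $P_1^{e_1}\cdots P_r^{e_r}\mid f$ is stable under the translation action $f(x)\mapsto f(x+c)$ for $c\in\F_p$. For fixed $g\neq 0$ and $p>n$, the phase $c\mapsto[f(x+c),g]$ is a nonconstant polynomial in $c$ of degree at most $n$, so Weil's one-variable bound gives $\bigl|\sum_{c\in\F_p} e^{2\pi i[f(x+c),g]/p}\bigr|\le (n-1)p^{1/2}$ on each orbit; summing over the $O(p^{\,n-k-1})$ orbits yields $\widehat{w_{p,\sigma}}(g)=O(p^{-k-1/2})$. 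A secondary technical point: the paper replaces your sharp box by a smooth majorant $\phi$, so that the dual sum is effectively truncated at $|g|\ll C^{1+\epsilon}/H$ with Schwartz tails, cleanly avoiding the Dirichlet-kernel logarithms you allude to.
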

The estimate $O(H^{n}c^{\omega(C)} /D)$ of Lemma~\ref{keylemma}, summed over all squarefull $D>H^{2+2\delta}$, then gives $O(H^{n-1-\delta+\epsilon})$; this thereby removes the $\epsilon$ in (\ref{sumoff}). 

We now replace the condition $$D\leq H^2$$ in {Case II} by $$D\leq H^{2+2\delta}.$$ 
Since we had already proven a power saving in this case, the total estimate in this case, even with this small change, is still $O(H^{n-1})$.

%



%
%
%


Finally, we turn to Case III, and replace the condition 
$$C>H$$
by
$$C>H^{1+\delta}.$$
Thus Cases I, II, and III again cover all possibilities.

\pagebreak
Note that there are two sources of the $\epsilon$ in our original treatment of {Case~III}:

\begin{enumerate}
\item[(i)]
The first source is that the number of factors $C$ of $\mathrm{DD}(a_1,\ldots,a_{n-1})$ is~$O(H^\epsilon)$. 
\item[(ii)]
The second source is that, for each choice of $a_1,\ldots,a_{n-1}$ such that $\mathrm{DD}(a_1,\ldots,a_{n-1})\neq 0$, and each choice of factor $C
\mid \mathrm{DD}(a_1,\ldots,a_{n-1})$,  there are $O((n-1)^{\omega(C)})=O(H^\epsilon)$
 choices for $a_n$. 
\end{enumerate}


%
%

We remove the $\epsilon$'s in these arguments as follows.  We choose a suitable factor $C'$ of~$C$ that is between $H^{1+\delta/2}$ and $H^{1+\delta}$ in size,  in which case we can handle it by a method analogous to  {Case I} (with $C'$ in place of~$C$).
 Otherwise, we can choose a factor $C'>H$ of~$C$ all of whose prime divisors are greater than $H^{\delta/2}$,  in which case we can handle it by the same method as in {Case III} above (with $C'$ in place of $C$)---but with no $\epsilon$ occurring because~$C'$ will have at most a bounded number of prime factors!


To be more precise, we break into two subcases:

\vspace{-.05in}
\subsubsection*{Subcase (i): $A=\displaystyle\prod_{{\scriptstyle p\mid C}\atop{\scriptstyle p>H^{\delta/2}}} p \leq H$}

\vspace{-.05in} 
In this subcase, $C$ has a factor $B$ between $H^{1+\delta/2}$ and $H^{1+\delta}$, with $A\mid B\mid C$. Let $B$ be the largest such factor. 
Let $D':=\prod_{p\mid B} p^{v_p(D)}$. Then $D'>H^{2+\delta}$.
We now carry out the argument of Case I, with $B$ in place of $C$, and $D'$ in place of $D$. (Note that $D$ determines $C$ determines $B$ determines $D'$.) 
Summing $O(c^{\omega(D')}H^n/D')$ over all squarefull $D'>H^{2+\delta}$ then gives the desired estimate $O(H^{n-1})$ in this subcase: 
$$\sum_{D'>H^{2+\delta} \:\mathrm{ squarefull}} O(c^{\omega(D')}H^n/D')= O_\epsilon(H^{n-1-\delta/2+\epsilon}) =O(H^{n-1}).$$

\vspace{-.15in}
\subsubsection*{Subcase (ii): $A=\displaystyle\prod_{{\scriptstyle p\mid C}\atop{\scriptstyle p>H^{\delta/2}}} p > H$}

\vspace{-.05in} 
In this subcase, we carry out the original argument of Case III, with $C$ replaced by~$A$.
We have $A\mid \mathrm{DD}(a_1,\ldots,a_{n-1}):=\Disc_{a_n}(\Disc_x(f(x)))$. 

Fix $a_1,\ldots,a_{n-1}$ such that $\mathrm{DD}(a_1,\ldots,a_{n-1})\neq 0$.   Being bounded above by a fixed power of $H$, we see that 
$\mathrm{DD}(a_1,\ldots,a_{n-1})$ can have at most a {\bf bounded} number of possibilities for the factor $A$ (since all prime factors of $A$ are bounded below by a fixed positive power of $H$)!

Once $A$ is determined by $a_1,\ldots,a_{n-1}$,   then the number of solutions for $a_n$ (mod $A$) to $$\Disc(f)\equiv0 \mbox{ (mod $A$)}$$ is $$O((n-1)^{\omega(A)})=O(1).$$ 
 Since $A>H$, the total number  of $f$ in this subcase is also   $O(H^{n-1})$.
 
 \vspace{.1in}
 This completes the proof of Theorem~\ref{main}, assuming the truth of Lemma~\ref{keylemma}.

\subsection{Proof of Lemma~\ref{keylemma}}
 
For a ring $R$, let $V^1_R$ denote the set of monic polynomials of degree $n$ over $R$, which we may identify with $R^n$.  For a function $\Psi_q : V_{\Z/q\Z}^1 \rightarrow \C$, let \smash{$\widehat{\Psi_q}\colon V^{1\ast}_{\Z/q\Z} \rightarrow \C$} be its Fourier transform defined by the usual formula
$$
{\smash{\widehat{\Psi_q}(g)} = \frac1{q^{n}}
\sum_{f \in V^{1}_{\Z/q\Z}} \Psi_q(f) \exp\left(\frac{2 \pi i [f, g]}{q}\right),}
$$
where $V^{1*}_R$ denotes the $R$-dual of $V^1_R$. 
If $\Psi_q$ is the characteristic function of a set {$S\subset V^1_{\Z/q\Z}$}, then upper bounds on the maximum $M({\Psi}_q)$ of \smash{$|\widehat{\Psi}_q(g)|$} over all nonzero $g$ constitutes a measure of 
{equidistribution} of {$S$} in suitable boxes of monic integer polynomials of degree $n$. This is because, for any Schwartz function {$\phi$} approximating the characteristic function of the box $[-1,1]^{n}$, the twisted Poisson summation formula gives 
{\begin{equation}
\begin{array}{rl}\label{twist2}
& \displaystyle\sum_{f=(a_1,\ldots,a_n) \in V_\Z^1} \Psi_q(a_1,\ldots,a_n) \phi(a_1/H,a_2/H,\ldots,a_n/H) \\[.325in] = 
H^{n} &\displaystyle\sum_{g=(b_1,\ldots,b_n) \in V^{1*}_\Z} \widehat{\Psi_q}\,(b_1,\ldots,b_n) \widehat{\phi}\left(b_1H/q,b_2H/q\ldots,b_nH/q\right).
\end{array}\end{equation}}For suitable {$\phi$}, the left side of (\ref{twist2}) will be an upper bound for the number of 
elements in~$S$ in the box $[-H,H]^{n}$. The  {$g = 0$} term is the expected main term,  while the rapid decay of \smash{$\widehat{\phi}$} implies that the error term is effectively bounded by $H^{n}$ times the sum of \smash{$|\widehat{\Psi_q}(g)|$} 
over all $0\neq g\in V^*_\Z$ whose coordinates are of size at most $O(q^{1+\epsilon}/H)$, and this in turn can be bounded by $O(H^{n}(q^{1+\epsilon}/H)^{n}M(\Psi_q))=O(q^{n+\epsilon}M(\Psi_q))$. 


In this subsection, we show that the monic polynomials of degree $n$ over $\F_p$, having splitting type containing a given splitting type $\sigma$, are very well distributed in boxes. We accomplish this by demonstrating cancellation in the Fourier transform of certain corresponding weighted characteristic functions, using Weil's bounds~\cite{Weil} on exponential sums.

To state the result precisely, we shall need the following definitions. 

\begin{itemize}
\item If a monic polynomial $f$ (over $\Z$, or over $\F_p$) factors modulo $p$ as $\prod_{i=1}^r P_i^{e_i}$, with $P_i$ monic irreducible and $\deg(P_i)=f_i$, then the {\it splitting type} $(f,p)$ of $f$ is defined to be $(f_1^{e_1}\cdots f_r^{e_r})$. 

\item The {\it index} $\ind(f)$ of $f$ modulo~$p$ (or the {\it index} of the splitting type $(f,p)$ of~$f$) is then defined to be $\sum_{i=1}^r (e_i-1)f_i$. 

\item More abstractly, we call any expression $\sigma$ of the form $(f_1^{e_1}\cdots f_r^{e_r})$ a {\it splitting type}.


\item 
The {\it index} $\ind(\sigma)$ of $\sigma$ is defined to be $\sum_{i=1}^r (e_i-1)f_i$. 

\item 
Finally, $\#\Aut(\sigma)$ is defined to be $\prod_i f_i$ times the number of permutations of the factors
$f_i^{e_i}$ that preserve $\sigma$. (See~\cite[\S2]{mass} for the motivation for this definition.)
\end{itemize}

\pagebreak 
\begin{proposition}\label{ftgen2}
Let $\sigma=(f_1^{e_1}\cdots f_r^{e_r})$ be a splitting type with   
$\ind(\sigma)=k$.
Let $w_{p,\sigma}:V^1_{\F_p}\to\C$ be defined by
\begin{align*}
w_{p,\sigma}(f):=&\text{ the number of $r$-tuples $(P_1,\ldots,P_r)$, up to the action of the group of}  \\[-.04in] & \text{  permutations of $\{1,\ldots,r\}$ preserving $\sigma$, such that the $P_i$ are distinct } \\[-.04in] &\text{ irreducible monic polynomials with $\deg P_i=f_i$ for each $i$ and $P_1^{e_1}\cdots P_r^{e_r}\mid f$}.
\end{align*}
Then 
\begin{equation*}
\widehat{w_{p,\sigma}}(g)=
\begin{cases}
	{\displaystyle\frac{p^{-k}}{\small\Aut(\sigma)}\!+\!O(p^{-k-1})}	& \!\text{if } {g=0};\\[.2in]
	{O(p^{-k-1/2})}		& \!\text{if $g\neq 0$.}
\end{cases}
\end{equation*}
\end{proposition}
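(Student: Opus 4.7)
The plan is to compute $\widehat{w_{p,\sigma}}(g)$ directly by swapping the order of summation---separating into an inner character sum over monic polynomials $f$ divisible by a fixed product $Q$ and an outer sum over tuples $(P_1,\ldots,P_r)$---and then treating the outer sum via the prime polynomial theorem for $g=0$ and Weil's bound for $g\neq 0$. Throughout write $e_p(x):=\exp(2\pi i x/p)$.

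Unfolding the definition of $\widehat{w_{p,\sigma}}(g)$ and interchanging sums yields
\[
\widehat{w_{p,\sigma}}(g)\;=\;\frac{1}{p^n N_\sigma}\sum_{(P_1,\ldots,P_r)}\sum_{f\in V^1_{\F_p}:\,Q\mid f}e_p([f,g]),
\]
where the outer sum runs over ordered tuples of distinct monic irreducible $P_i\in\F_p[x]$ of degree $f_i$, we set $Q:=\prod_i P_i^{e_i}$ (of degree $m:=k+\sum_i f_i$), and $N_\sigma$ is the number of permutations of $\{1,\ldots,r\}$ preserving $\sigma$ (so $\#\Aut(\sigma)=N_\sigma\prod_i f_i$). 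Writing $f=Q\cdot h$ with $h=x^{n-m}+c_1 x^{n-m-1}+\cdots+c_{n-m}$, the pairing $[f,g]$ is affine in $(c_1,\ldots,c_{n-m})$, so the inner sum factors into geometric sums and equals $p^{n-m}\,e_p([Qx^{n-m},g])$ if $[Qx^j,g]=0$ for every $0\le j\le n-m-1$, and is $0$ otherwise. For $g=0$ these conditions are vacuous: combining the prime polynomial theorem ($p^{f_i}/f_i+O(p^{f_i/2})$ monic irreducibles of degree $f_i$) with the observation that tuples having $P_i=P_j$ for some $i\neq j$ contribute an $O(p^{-1})$ relative correction gives $\widehat{w_{p,\sigma}}(0)=\frac{p^{-m}}{N_\sigma}\prod_i(p^{f_i}/f_i)(1+O(p^{-1}))=\frac{p^{-k}}{\#\Aut(\sigma)}+O(p^{-k-1})$, using $\#\Aut(\sigma)=N_\sigma\prod_i f_i$ and $m-\sum_i f_i=k$.

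For $g\neq 0$, the $n-m$ linear conditions $[Qx^j,g]=0$ pull back along the polynomial multiplication map $(P_1,\ldots,P_r)\mapsto Q$ to polynomial equations cutting out a subvariety $X_g\subseteq\F_p^{\sum_i f_i}$, and $e_p([Qx^{n-m},g])$ becomes an additive character applied to a polynomial function on $X_g$. I would incorporate irreducibility of each $P_i$ and distinctness of the tuple via M\"obius inversion over lattices of subfields (contributing only lower-order corrections) and then invoke Weil's bound for exponential sums along $X_g$. Provided the character does not collapse to a constant on any positive-dimensional component of $X_g$, this produces $O(p^{\sum_i f_i-1/2})$, which combined with the prefactor $p^{-m}/N_\sigma$ yields the claimed $O(p^{-k-1/2})$.

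The main obstacle is precisely this nondegeneracy. One must rule out that, for some $g\neq 0$, the character $e_p([Qx^{n-m},g])$ is constant on a component of $X_g$, which would defeat Weil-type cancellation. The plan to rule this out is to argue that constancy, together with the defining conditions $[Qx^j,g]=0$, would force $g$ to annihilate the linear span of all $\{Qx^j:0\le j\le n-m\}$ as $(P_1,\ldots,P_r)$ ranges in $X_g$, and a dimension/genericity argument then forces $g=0$, contradicting the hypothesis. Handling the degenerate cases where $X_g$ is zero-dimensional (the trivial bound by the number of points suffices) or where $n=m$ (so no $h$ to average over and the inner sum is simply $e_p([Q,g])$) cleanly is the remaining casework.
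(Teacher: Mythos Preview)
Your computation for $g=0$ matches the paper's. For $g\neq 0$, however, your approach has a genuine gap: the nondegeneracy you flag as the main obstacle can fail on positive-dimensional components of $X_g$, and your proposed dimension/genericity argument does not then force $g=0$. Concretely, take $\sigma=(1^2\,1^2)$ with $n=5$ (so $m=4$, $k=2$, $\sum f_i=2$) and $g=(0,0,0,0,b_5)$ with $b_5\neq 0$. Writing $Q=(x-a)^2(x-b)^2$, the sole condition $[Q,g]=b_5\,a^2b^2=0$ cuts out the $1$-dimensional locus $\{a=0\}\cup\{b=0\}$ in $\F_p^2$, and on it the character value $[Qx,g]=b_5\cdot(\text{constant term of }Qx)=0$ is identically constant. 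Your plan (that constancy together with the defining conditions forces $g$ to annihilate enough of $V^1$ to be zero) fails here because the span of the relevant $Qx^j$ over $X_g$ lies inside the hyperplane of polynomials with vanishing constant term, which $g$ genuinely annihilates. The desired estimate still holds in this example, since $|X_g|=O(p)$ already beats $p^{\sum f_i-1/2}=p^{3/2}$; but your stated case split (nonconstant character, or $X_g$ zero-dimensional, or $n=m$) does not cover it, and turning the correct dichotomy ``either the character is nonconstant on every top-dimensional component of $X_g$, or $\dim X_g\le\sum f_i-1$'' into a proof for general $\sigma$ and $g$---together with the requisite Weil/Deligne input on possibly singular varieties and the M\"obius bookkeeping for irreducibility---is nontrivial additional work.

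The paper bypasses all of this with a much simpler device. Rather than first summing over $h$, it keeps the full double sum $\sum_{(P_1,\ldots,P_r)}\sum_{Q\mid f}e_p([f,g])$ intact and partitions its terms into orbits of size $p$ under the substitution $x\mapsto x+c$, which preserves monicity, the irreducibility and degrees of the $P_i$, their distinctness, and the divisibility $Q\mid f$. For $p>n$ and $g\neq 0$, the map $c\mapsto[f(x+c),g]$ is a nonconstant polynomial in $c$ of degree at most $n$: its leading coefficient is $b_{i^\ast}\binom{n}{i^\ast}\not\equiv 0\pmod p$, where $i^\ast$ is the largest index with $b_{i^\ast}\neq 0$. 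The classical one-variable Weil bound then gives $\bigl|\sum_{c\in\F_p}e_p([f(x+c),g])\bigr|\le(n-1)p^{1/2}$ on each orbit, and summing over the $O(p^{n-k-1})$ orbits yields $|\widehat{w_{p,\sigma}}(g)|=O(p^{-k-1/2})$ directly---no variety geometry, no M\"obius over subfields, and no nondegeneracy verification required.
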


\begin{proof}
We have
\begin{eqnarray}\displaystyle
\widehat{w_{p,\sigma}}(g)&=&\displaystyle\frac{1}{p^{n}}\sum_{f\in V^1_{\F_p}} e^{2\pi i[f,g]/p} w_{p,\sigma}(f)
\label{firstexp2}\\
&=&\displaystyle\frac{1}{p^{n}}\sum_{P_1,\ldots,P_r}
\:\sum_{P_1^{e_1}\cdots P_r^{e_r} \mid f}
e^{2\pi i[f,g]/p}. \label{secondexp2}
\end{eqnarray}

When $g=0$, evaluating (\ref{secondexp2}) gives 
$\widehat{w_{p,\sigma}}(0)=(p^{-k}/\#\Aut(\sigma))+O(p^{-k-1})$.  This is
because 1) the number of possibilities for $P_1,\ldots,P_r$ is 
$(1/\#\Aut(\sigma))p^{\sum {e_i}}+O(p^{{\sum e_i-1}})$, and 2) the number of $f\in V^1_{\F_p}$ such that $P_1^{e_1}\cdots P_r^{e_r}$ divides $f$ is $p^{n-\sum {e_if_i}}.$ We conclude that
$$\widehat{w_{p,\sigma}}(0)=\frac1{p^n}\left(\frac{p^{\sum {e_i}}}{\#\Aut(\sigma))}+O(p^{\sum {e_i}-1})\right)p^{n-\sum {e_if_i}} = {\displaystyle\frac{p^{-k}}{\small\Aut(\sigma)}+O(p^{-k-1})}.	$$

When $g\neq 0$, we apply the Weil bound~\cite{Weil} on exponential sums to establish cancellation in and thereby obtain a nontrivial estimate on (\ref{secondexp2}) as follows. As already noted, the total number of $f$ (counted with multiplicity) in the double sum in (\ref{secondexp2}) is $\asymp\frac{p^{n-k}}{\#\Aut(\sigma)}$. We partition these polynomials  $f(x)$ into orbits of size $p$ under the action of translation $x\mapsto x+c$ for $c\in\F_p$.  We then consider the elements of each orbit together in (\ref{secondexp2}). 
Given such a polynomial $f(x)$, if $g\neq 0$ and $p>n$, then $[f(x+c),g]$ is a nonconstant univariate polynomial $Q(c)$ in $c$ of degree at most $n$. 
In that case, the contribution in (\ref{secondexp2}) corresponding to $f(x)$ and its translates $f(x+c)$ add up to 
$$\sum_{c\in\F_p} e^{2\pi i [f(x+c),g]} = \sum_{c\in\F_p} e^{2\pi i Q(c)}$$
which is at most $(n-1)p^{1/2}$ in absolute value by the Weil bound.  
Summing over the $O(p^{n-k-1})$ equivalence classes of these $f(x)$ under the action of translation $x\mapsto x+c$ then yields
$$|\widehat{w_{p,\sigma}}(g)|=O(p^{-n}p^{n-k-1}p^{1/2})=O(p^{-k-1/2}),$$
which improves upon the trivial bound $O(p^{-k}),$ as desired.
\end{proof}



\begin{corollary}\label{Dequi2}
Let $D$ be a positive integer with prime factorization $D=p_1^{k_1}\cdots p_m^{k_m}$ and let $C=p_1\cdots p_m$. 
The number of integral monic polynomials of degree $n$ in $[-H,H]^{n}$ 
that modulo $p_i$ have index at least $k_i$ for $i=1,\ldots,m$ is $O(c^{\omega(C)}H^{n}/D)+O_\epsilon(C^{n-1/2+\epsilon}/D)$. 
\end{corollary}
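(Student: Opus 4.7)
The plan is to apply the twisted Poisson summation formula~(\ref{twist2}) with modulus $q=C$ to a suitable majorant of the indicator function of our set, built out of the weighted characteristic functions $w_{p_i,\sigma_i}$ whose Fourier transforms were estimated in Proposition~\ref{ftgen2}. The first step is to bound the local condition ``$\ind(f\bmod p_i)\geq k_i$'' by a sum of $w_{p_i,\sigma_i}(f\bmod p_i)$ over splitting types $\sigma_i$ with $\ind(\sigma_i)=k_i$: the idea is that if $f\bmod p_i$ has splitting type $\tau$ with $\ind(\tau)\geq k_i$, one may truncate the exponents of $\tau$ to obtain a splitting type $\sigma_i$ of index exactly $k_i$ for which an admissible tuple of distinct irreducible factors of $f\bmod p_i$ is available, ensuring $w_{p_i,\sigma_i}(f\bmod p_i)\geq 1$. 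Because the number of splitting types of a given index is bounded in terms of $n$ alone, the indicator of interest is dominated, up to an $O(c^{\omega(C)})$ multiplicative factor, by a sum of products $\Psi_C:=\prod_i w_{p_i,\sigma_i}$, viewed as functions on $V^1_{\Z/C\Z}$.

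Next, I would apply the Chinese Remainder Theorem to factor $\widehat{\Psi_C}(g)=\prod_i \widehat{w_{p_i,\sigma_i}}(g\bmod p_i)$ and substitute the bounds from Proposition~\ref{ftgen2}. This yields $\widehat{\Psi_C}(0)=O(D^{-1})$, while for nonzero $g$ with $T_g:=\{i:g\bmod p_i\neq 0\}$ one has $|\widehat{\Psi_C}(g)|=O\bigl(D^{-1}\prod_{i\in T_g}p_i^{-1/2}\bigr)$. Invoking (\ref{twist2}), the $g=0$ term contributes $H^n\widehat{\Psi_C}(0)=O(H^n/D)$, which when summed over the $O(c^{\omega(C)})$ admissible tuples $(\sigma_1,\ldots,\sigma_m)$ gives the first summand in the claimed bound.

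For the error term coming from $g\neq 0$, the rapid decay of $\widehat{\phi}$ restricts the sum to coordinates $|b_j|\lesssim C^{1+\epsilon}/H$, and I would then decompose the sum according to $T=T_g$. The $g$'s with $T_g=T$ are the nonzero multiples of $C/P_T$ (with $P_T:=\prod_{i\in T}p_i$) inside the truncation box, numbering at most $O((P_T C^{\epsilon}/H)^n)$; each contributes $|\widehat{\Psi_C}(g)|=O(D^{-1}P_T^{-1/2})$. The contribution of this stratum is thus at most $H^n\cdot (P_T C^{\epsilon}/H)^n\cdot D^{-1}P_T^{-1/2}=O(C^{n\epsilon}P_T^{n-1/2}/D)$; summing over the $2^{\omega(C)}=O_\epsilon(C^{\epsilon})$ subsets $T\subseteq\{1,\ldots,m\}$ and using $P_T\leq C$ delivers the second summand $O_\epsilon(C^{n-1/2+\epsilon}/D)$.

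The main obstacle I anticipate is the first step: cleanly expressing the ``index at least $k_i$'' condition as a bounded (in $n$) sum of the $w_{p_i,\sigma_i}$'s, verifying that the truncation procedure on splitting types produces only an overcount by a constant depending only on $n$, and handling the case of wild ramification if that intervenes. Once that combinatorial preparation is in place, the Fourier/CRT computation and the error bookkeeping described above are essentially mechanical.
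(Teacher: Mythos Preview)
Your approach is essentially the paper's: majorize the indicator by a product of sums of the $w_{p_i,\sigma}$, apply twisted Poisson summation, factor the Fourier transform by CRT, and stratify the error by which primes divide the content of $g$ (your $T_g$ is exactly the complement of the paper's $q=(\ct(g),C)$, so $P_T=C/q$ and the bookkeeping matches line for line).

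There is one genuine glitch in your combinatorial preparation. The truncation to splitting types of index \emph{exactly} $k_i$ need not be possible: if, say, $f\bmod p$ equals $P^2\cdot g$ with $P$ irreducible of degree~$2$ and $g$ squarefree coprime to $P$, then $\ind(f\bmod p)=2$, yet every $\sigma$ with $\ind(\sigma)=1$ must contain a block $1^2$, so $w_{p,\sigma}(f)=0$ for all such $\sigma$. The fix is immediate and is precisely what the paper does: sum over all $\sigma$ with $\ind(\sigma)\geq k_i$ and $\sum e_jf_j\leq n$. There are still only $O_n(1)$ such $\sigma$, and Proposition~\ref{ftgen2} gives $\widehat{w_{p,\sigma}}(0)=O(p^{-\ind(\sigma)})\leq O(p^{-k_i})$ and $\widehat{w_{p,\sigma}}(g)=O(p^{-\ind(\sigma)-1/2})\leq O(p^{-k_i-1/2})$ for $g\neq 0$, so the rest of your Fourier/CRT computation and the stratified error estimate go through unchanged. (Your worry about wild ramification is moot here: Corollary~\ref{Dequi2} is a statement purely about the index of $f\bmod p_i$, and ramification in $K_f$ enters only later, in the deduction of Lemma~\ref{keylemma}.)
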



\begin{proof}
First, we note that the values of the $\Z/C\Z$-Fourier transform are simply products of values of the $\F_{p_i}$-Fourier transforms (one value for each $i$). 

Let $\phi$ be a smooth function with compact support that is identically $1$ on $[-1,1]^{n}$.  Let $\Psi:V^1_{\Z/C\Z}\to \R$ be defined by $\Psi=\prod_i(\sum_{\sigma:\ind(\sigma)\geq k}w_{p_i,\sigma}$). 
By twisted Poisson summation~(\ref{twist2}), we have
\begin{eqnarray*}
&\!\!\!\!\!\!\!\!\!\!\!\!\!\! & 
\!\!\sum_{f \in V_\Z^1} \Psi(f) \phi(f/H)\\[.025in]  &\!\!\!\!\!\!\!\!\!\!\!\!=\!\!\!\!& \!\!H^{n} \sum_{g \in V^{1*}_\Z} \widehat{\Psi}(g) \widehat{\phi}\left(\frac{g H}{C} \right) \\
&\!\!\!\!\!\!\!\!\!\!\!\!\ll\!\!\!\!& \!\!H^{n}\widehat{\Psi}(0) \widehat{\phi}(0) + H^{n}\!\hspace{-.2in} \sum_{g \in \bigl[-\frac{C^{1+\epsilon}}{H},\frac{C^{1+\epsilon}}{H}\bigr]^{n}\cap V^{1*}_\Z\setminus\{0\}} \hspace{-.2in} \!\!\!\left|\widehat{\Psi}(g)\right|+ H^{n} \hspace{-.2in}
\!\!\sum_{g\notin \bigl[-\frac{C^{1+\epsilon}}{H},\frac{C^{1+\epsilon}}{H}\bigr]^{n}\cap V^{1*}_\Z}\hspace{-.2in}\!\!\!\left|\widehat{\phi}\left(\frac{g H}{C} \right)\right| \\[.1in]
&\!\!\!\!\!\ll_{\epsilon,N}\!\!\!\!& \,c^{\omega(C)}H^{n}/D +
H^{n}\hspace{-.2in} \!\sum_{g \in \bigl[-\frac{C^{1+\epsilon}}{H},\frac{C^{1+\epsilon}}{H}\bigr]^{n}\cap V^{1*}_\Z\setminus\{0\}} \!\hspace{-.2in} \!\!\!\left|\widehat{\Psi}(g)\right|
+ H^{n} \!\!\!\!\!\!\!\!\!\sum_{g\notin  \bigl[-\frac{C^{1+\epsilon}}{H},\frac{C^{1+\epsilon}}{H}\bigr]^{n}\cap V^{1*}_\Z} \!\hspace{-.2in}\!\! \left(\frac{\|g\| H}{C} \right)^{\!-N} 
\end{eqnarray*}
for any integer $N$; the bound on the third summand holds because $\phi$ is smooth and thus is $N$-differentiable for any integer $N$, and so $\widehat{\phi}(g)\ll_N \|g\|^{-N}$ (see, e.g., \cite[Chapter~5 (Theorem~1.3)]{SS}). 
By choosing $N$ sufficiently large, the third term can be absorbed into the first term.
We now estimate the second term using Proposition~\ref{ftgen2}:
\begin{equation*}
\begin{array}{rcl}
\displaystyle H^{n}\hspace{-.175in}  \sum_{g\in\bigl[-\frac{C^{1+\epsilon}}{H},\frac{C^{1+\epsilon}}{H}\bigr]\cap V_\Z^{1*}\backslash\{0\}} \hspace{-.2in} \left|\widehat{\Psi}(g)\right|
&\!\ll\!&\displaystyle
H^{n}c^{\omega(C)}\sum_{\substack{q\mid C}}\sum_{\substack{g\in\bigl[-\frac{C^{1+\epsilon}}{H},\frac{C^{1+\epsilon}}{H}\bigr]^n\cap V_\Z^{1*}\backslash \{0\}\\(\ct(g),C)=q}}q^{1/2}\prod_{i=1}^m p_i^{-k_i-1/2}
\\[.425in]
&\!\ll_\epsilon\!&\displaystyle
H^{n}\sum_{\substack{q\mid C}}
\frac{C^{n+\epsilon}}{q^{n}H^{n}}\cdot q^{1/2}\prod_{i=1}^m p_i^{-k_i-1/2}
\\[.375in]
&\!\ll_\epsilon\!&\displaystyle
C^\epsilon \prod_{i=1}^m p_i^{n-k_i-1/2}
\\[.375in]
&\!\ll_\epsilon\!&\displaystyle
C^{n-1/2+\epsilon}/D,
\end{array}
\end{equation*}
where the content $\ct(g)$ of $g$ denotes the largest integer such that $g/\ct(g)\in V_\Z^{1*}$.  This yields the desired result.
\end{proof}

\pagebreak 
We now complete the proof of the key lemma, Lemma~\ref{keylemma}. Suppose $f$ is a monic integer polynomial of degree $n$ such that $|\Disc(K_f)|=D=\prod p_i^{k_i}$.  Then (aside from primes $p\mid n$ where there may be wild ramification) the index of $f$ (mod $p_i$) is at least $k_i$.
By  Corollary~\ref{Dequi2}, the number of monic integer polynomials $f$ of degree $n$ such that the index of $f$ (mod $p_i$) is at least $k_i$ for all $i$, and the product $C=\prod p_i$ of ramified primes in $K_f$ satisfies $C<H^{1+\delta}$, is $$O(c^{\omega(C)}H^{n}/D)+O_\epsilon(C^{n-1/2+\epsilon}/D)
=O(c^{\omega(C)}H^{n}/D)+O_\epsilon((H^{1+\delta})^{n-1/2+\epsilon}/D)
=O(c^{\omega(C)} H^{n}/D)$$ since $\delta<1/(2n-1)$.  This completes the proof of Lemma~\ref{keylemma} (and thus also Theorem~\ref{main}). $\Box$

\section{Related results and variations} 

We note that the non-monic case (the subject of van der Waerden's original conjecture) can be handled in essentially the same way, in order to prove that the number of integer-coefficient polynomials of degree $n$ with coefficients bounded in absolute value by $H$ whose Galois group is not $S_n$ is $E_n^\ast(H)=O(H^{n})$.

Other results that can be proven using extensions of the methods described in this article:
\begin{itemize}
\item If $G\neq S_n$ or $A_n$: 
\begin{itemize} \item If $n\geq 10$,  then $N_n(G,H)=O(H^{n-2})$;  \item If $n\geq28$, then $N_n(G,H)=O(H^{n-3})$;  \item For sufficiently large $n$, we have $E_n(H)=O(H^{n-cn/\log^2n})$ where $c$ is an absolute constant.\end{itemize}
\item For $p$ a prime, if $G=C_p$ (the cyclic group of order $p$), then $N_n(C_p,H)=O(H^2)$.  
\item If $G$ is a regular permutation group on $n$ letters, then 
$N_n(G,H)=O(H^{3n/11\,+\,1.164}).$
\item We have $N_{11}(M_{11},H)=O(H^{8.686})$, where $M_{11}$ is the Mathieu group on 11 letters. 
\item (A question of Serre)  The number of monic even integer polynomials $$g(x)=x^{2n}+a_1x^{2n-2}+a_2x^{2n-4}+\cdots+a_n$$ with $|a_i|<H$ for all $i$ whose Galois group is not the Weyl group $W(B_n)\cong S_2^n\rtimes S_n$  is~$\:\asymp H^{n-1/2}$, 
and the number for which it is also not the Weyl group $W(D_n)$ is  $\:\asymp H^{n-1}$. 
\end{itemize}
For more details on these results and variations, see~\cite{vdwproof}. 

\subsection*{Acknowledgments}

We are extremely grateful to Benedict Gross, Danny Neftin, Andrew O'Desky, Robert Lemke~Oliver, Ken~Ono, Fernando Rodriguez-Villegas, Arul Shankar, Don Zagier, and the anonymous referee for all their helpful comments and encouragement. 
Most of all, we thank Don~Zagier for his friendship and years of inspiration---wishing him a very happy birthday and many happy returns!

\end{document}